\numberwithin{equation}{section}
\newtheorem{thm}{Theorem}[section]
\newtheorem{lemma}[thm]{Lemma}
\newtheorem{rem}[thm]{Remark}
\newcommand{\ee}{{\mathrm e}}
\newcommand{\ii}{{\mathrm i}}
\definecolor{light-gray}{gray}{0.98}
\DeclareMathOperator{\I}{\mathbb{I}}
\DeclareMathOperator{\OO}{\mathcal{O}}
\title{Third order, uniform in low to high oscillatory coefficients, exponential integrators for Klein-Gordon equations.}
\begin{document}
	
\author{Karolina Kropielnicka\footnote{Institute of Mathematics, Polish Academy of Sciences, Warsaw, Poland}, Karolina Lademann\footnote{Institute of Mathematics, Physics and Computer Science, University of Gda\'{n}sk, Gda\'{n}sk, Poland}}
		\maketitle
\begin{abstract} Allowing for space- and time-dependence of mass in Klein--Gordon equations resolves the problem of negative probability density and of violation of Lorenz covariance of interaction in quantum mechanics. Moreover it extends their applicability to the domain of quantum cosmology, where the variation in mass may be accompanied by high oscillations. In this paper we propose a third-order exponential integrator, where the main idea lies in embedding the  oscillations triggered by the possibly highly oscillatory component intrinsically into the numerical discretisation. While typically high oscillation requires appropriately small time steps, an application of Filon methods allows implementation with large time steps even in the presence of very high oscillation. This greatly improves the efficiency of the time-stepping algorithm. 
	
	Proof of the convergence and its rate are nontrivial and require  alternative representation of the equation under consideration. We derive careful bounds on the growth of global error in time discretisation and prove that, contrary to standard  intuition, the error of time integration does not grow once the frequency of  oscillations increases. Several numerical simulations are presented to confirm the theoretical investigations and the robustness of the method in all oscillatory regimes.
\end{abstract}


\section{Introduction}

In this paper we consider the Klein--Gordon  equation 
\begin{equation}\label{klein_duh}
	\begin{aligned}
		&\frac{\partial ^2}{\partial t^2}  \psi(x,t) =\Delta \psi(x,t)+ m(x,t)\psi(x,t), \quad t \in[t_0,T], \quad x \in  \mathbb{T}^d\\
		& \psi(x,t_0) = \psi_0(x), \quad   \partial_t \psi (x,t_0) = \varphi_0(x),
	\end{aligned}
\end{equation} 
with periodic boundary conditions.\footnote{Our assumption of periodic boundary conditions is solely for the sake of simplicity of exposition and our ideas can be easily generalised to other boundary conditions.} 
Here $\psi(x,t)$ denotes the unknown wave function that we wish to approximate numerically and $-m(x,t)>	0$ stands for the square of the time- and space- dependent mass, which may be presented or approximated by the truncated, modulated Fourier expansion. Thus we assume in this paper that $m(x,t)<0$ is given with
\begin{equation}\label{mass}
	m(x,t) = \sum_{n=1}^N a_n(x,t) e^{i \omega_n t}
\end{equation}
where frequencies $\omega_n \in \mathbb{R}, \: n \leq N, N \in \mathbb{N}$. We let $\tilde{\omega}=\max_{n=1,\ldots,N}|\omega_n|$.  The coefficients $a_n(x,t)$, $n\leq N$, do not oscillate in the $\omega_n$s (i.e. their time derivatives  are bounded independently of $\omega_n$, similarly to the setting in \cite{CHL}).

Linear and nonlinear Klein--Gordon equations received much attention from a theoretical and numerical point of view, see, e.g.\ \cite{bao2, 12,6,RAVIKANTH2009708,2017AIPC.1847b0021K,IKRAM2021,shark,yus}. However, linear Klein--Gordon equations, are typically investigated under assumption of a constant  $m$, and this is  a source of negative probability density. This problem has been overcome in \cite{mos2} and  \cite{mos1}  by admitting space dependence in the input term, that is $m(x)$ instead of $m$. This remedy, however, leads to the violation of Lorentz covariance of the interaction. Only recently these two problems --  negative probability density and of violation of Lorenz covariance -- have been solved in \cite{zno2} and \cite{zno1}, employing $m(x,t)$ instead of $m(x)$. Thanks to this improvement, as explained in \cite{zno3} and \cite{zno1}, application of (\ref{klein_duh}) can be extended also to the domain of quantum cosmology. For more details we refer the reader to \cite{Kofman1994rk}, where the Klein-Gordon  equation (with time- and space- dependent mass  oscillatory in time) is applicable to the theory of reheating the Universe after inflation.

Numerical investigation of (\ref{klein_duh}) has been very limited and none of the aforementioned results is concerned with the case of time- and space-dependent input term.  As far as we are aware, a computational approach  for Klein--Gordon equations with time- and space-dependent input term $m(x,t)$ was  considered only in three papers. In \cite{baderblanes} the authors proposed 4th and 6th order methods based on commutator-free, Magnus type expansion. These powerful methods have been designed for non-oscillatory versions of $m(x,t)$. 
The methods proposed in \cite{baderblanes} are likely to fall short in case of highly oscillatory input terms (mainly because of wired-in Gauss--Legendre quadrature for highly oscillatory integrals). Highly oscillatory coefficients of type $m(x,t) = \sum_{n=1}^N a_n(x,t) e^{i \omega_n t}$ with $|\omega_n| \gg 1$ were treated in \cite{asympt}, where authors proposed numerical-asymptotic expansions. The latter work, however fails for non-oscillatory components. The most challenging form of coefficient $m(x,t)$ is when it includes low and high frequencies, for example $m(x,t)=a_0(x,t)+a_1(x,t)\ee^{i t}+a_2(x,t)\ee^{i 10^6 t}$, because this prevents us from treating the equations solely with either classical or asymptotic numerical means. 

An efficient approach to this type of problems has been introduced in \cite{accurate_KKK} where the authors utilise splitting techniques based on the Magnus expansion. Efficiency of the proposed method is described in terms of accuracy, rather than  order, and substantially depends on the ratio of magnitude between the time step $h$ and the maximum frequency $\tilde{\omega}$ in $m(x,t)$.

In the current paper we describe a third-order in time method which is equally efficient in the full range of frequencies, e.g.\ $m(x,t)=a_0(x,t)$, $m(x,t)=a_2(x,t)\ee^{i 10^6 t}$ and $m(x,t)=a_0(x,t)+a_1(x,t)\ee^{i t}+a_2(x,t)\ee^{i 10^6 t}$, where the error constant does not grow with the size of $\omega_n$s, and no ratio between time step $h$ and either $\tilde{\omega}$ or space discretisation need be imposed. In a nutshell, the main achievement of the paper is to design a framework for discretizations which are not degraded in the presence of high oscillation. This is achieved mainly due by embedding the oscillatory phases explicitly into the integral followed by an application of  Filon-type methods. We will present a rigorous proof of  convergence, discuss the structure of the error of the method and explain why it is not affected even by extremely high oscillations and conclude by numerical simulations illustrating the results. 

The term {\it low or high oscillations} should be understood as the relation between the time step $h$ and minimum and maximum the frequencies ($\omega_{\min}$ and $\omega_{\max}$ respectively). In the case of $ h \leq \frac{1}{\omega_{\max}} $ we face the low-oscillatory regime. In this case of large $ \omega_{\max} $ requires very small time step $h$, what leads to computational errors and high time and memory costs. Highly oscillatory regime means that  $ h \geq \frac{1}{\omega_{\min}} > \frac{1}{\omega_{\max}} $. In this case methods based on Taylor's theorem allow for the $ n $th-order convergence rate, but lead to the large constant error which scales like $ (\omega_{\max})^n $.

Note that this paper is focused on time integration and the elaborated methodology is suitable for any choice of spatial discretisation. This  may depend on the nature of boundary conditions of the specific problem in applications and is consistent e.g.\ with finite difference, finite element and spectral approaches. In our numerical examples we assumed that the problem, consistently with (\ref{klein_duh}), is equipped with periodic initial and boundary conditions and used a Fourier spectral collocation in space.

{ \bf Outline of the paper.}
In  Section \ref{DERIVATION} we recall basic properties of the Duhamel formula and  indicate how they can be adapted to Klein--Gordon-type equations. Section \ref{CONVERGENCE} presents the comprehensive analysis of the convergence of the method. In Section \ref{EXPERIMENTS}  we illustrate our theoretical findings  with  numerical experiments. In addition, we compare  various  existing methods with the new approach and highlight the favorable behavior of the new method with respect to high frequencies  $\omega_n$. 

\section{Derivation of the method}\label{DERIVATION}

As already stated in the introduction, we are concerned solely with time discretisation: in practical application it should be of course accompanied by a space  discretisation, but our framework is consistent with an arbitrary choice of the latter. For this reason in the further part of this manuscript we  suppress dependance on space variable $x$ and  write $\psi(t), a_n(t), m(t)$ instead of $\psi(x,t), a_n(x,t), m(x,t)$. In order to derive third order methods we assume that $\psi'''\in L_1([t_0,T])$. 
In this setting we can express (\ref{klein_duh})  as the following abstract evolution equation 
\begin{equation}\label{dd}
\dfrac{d}{dt} \left[ \begin{array}{c}
	\psi(t) \\
	\psi'(t)
\end{array} \right]  = \left[ \begin{array}{cc}
	0 & \I \\
	\Delta & 0
\end{array} \right] \left[ \begin{array}{c}
	\psi(t)\\
	\psi'(t)
\end{array} \right] + \left[ \begin{array}{c}
	0\\
	m(t)\psi(t)
\end{array} \right] 
\end{equation}
with formal solution  given by Duhamel formula,
\begin{equation}\label{duh0}
\left[ \begin{array}{c}
	\psi(t) \\
	\psi'(t)
\end{array} \right]  = R(t-t_0) \left[ \begin{array}{c}
	\psi_0 \\
	\varphi_0
\end{array} \right] + \int_{t_0}^{t}R(t-\tau) \left[ \begin{array}{c}
	0 \\
	m(\tau)\psi(\tau)
\end{array} \right] d\tau, 
\end{equation}
where
$$ R(t) = \exp \left[ \begin{array}{cc}
0 & \I t \\
\Delta t& 0
\end{array} \right] = \left[ \begin{array}{cc}
\cos (t G )&  G^{-1} \sin(t  G ) \\
- G	\sin(t G ) & \cos (t G) 
\end{array} \right], \quad \text{ and } \quad G \cdot G = - \Delta.$$
Note that, $-\Delta$ being positive definite, we can also choose a positive  definite $G$.

In computational implementation we resort to full discretisation and then functions $\psi(t), a_n(t), m(t)$ are replaced by vectors and the unbounded operator $G$   by a suitable positive\footnote{We should  use symmetric spatial discretisation of $-\Delta$ to ensure the existence of positive discrete operator $G_d$.  }, finite-dimensional operator $G_d$.

Let us assume that $T/h=K$ and $t_k=kh$. Iterating Duhamel's formula we obtain the following relations at the time steps $t_{k + 1}= t_k + h$, $k=0,\ldots, K-1$:
\begin{eqnarray}\label{duh3}
\nonumber
\left[ \begin{array}{c}
	\psi(t_k +h) \\
	\psi'(t_k +h)
\end{array} \right]  &=& R(t_k +h - t_k) \left[ \begin{array}{c}
	\psi(t_k) \\
	\psi'(t_k)
\end{array} \right] + \int_{t_k}^{t_k +h}R(t_k +h-\tau) \left[ \begin{array}{c}
	0 \\
	m(\tau)\psi(\tau)
\end{array} \right] d\tau \\
&=& R(h) \left[ \begin{array}{c}
	\psi(t_k) \\
	\psi'(t_k)
\end{array} \right] + \int_{0}^{h}R(h-\tau) \left[ \begin{array}{c}
	0 \\
	m(t_k+\tau)\psi(t_k+\tau)
\end{array} \right] d\tau.
\end{eqnarray}
As in the case of Gautschi-type methods \cite{hochlub}, the  above representation of the solution is the foundation of our numerical scheme.

\begin{rem}\label{primy}
In this manuscript we understand that $\psi'(t_k)$ and $\psi''(t_k)$ are the first and second, respectively, time derivatives of function $\psi(\,\cdot\,,t)$ at point $t_k$. In particular we write $\psi'(t_0)$ instead of $\varphi_0$.
\end{rem}

The system (\ref{duh3})  reduces to two  time-stepping formulas for the values of the functions $ \psi (t) $ and  $ \psi' (t) $ at the time step $ t_{k + 1} $, namely
\begin{align}\label{duh6}
\psi(t_k + h) &= \cos (h G ) \psi(t_k) + G^{-1} \sin(h  G ) \psi'(t_k) + \int_{0}^{h} G^{-1} \sin((h-\tau)  G )  m(t_k+\tau)\psi(t_k+\tau)  d\tau,
\\ \label{duh81}
\psi'(t_k + h) &= - G	\sin(h G )  \psi(t_k) + \cos (h G)  \psi'(t_k) + \int_{0}^{h} \cos ((h-\tau) G)  m(t_k+\tau)\psi(t_k+\tau) d\tau.
\end{align} 

The main difficulty of the above formulation is that it is an implicit system of equations. Given the initial conditions we  resort within the integral to the Taylor expansion  
\begin{equation}\label{tay_3}
\psi(t_k+\tau)  = \psi(t_k) + \tau \psi'(t_k) + \dfrac{\tau^2}{2!} \psi''(t_k)  + E_k(\tau)
\end{equation} 
where $ E_k(\tau)=\int_{t_k}^{t_k+\tau} \dfrac{(t_k+\tau-s)^2}{2!} \psi'''(s)ds$. 

\begin{rem}\label{second_derivative_stability}
The above approximation could be directly incorporated into  (\ref{duh6})  and (\ref{duh81}) because $ \psi''(t_k) $  can be immediately recovered from the initial condition (for $k=0$) and from the original problem (\ref{klein_duh}) for $k\geq 1$. The latter, however would lead to additional numerical evaluations. For this reason for $k\geq 1$ we  resort again to the Taylor formula 
\begin{equation}\label{key}
	\psi''(t_k) =  \dfrac{\psi'(t_k)-\psi'(t_{k-1})}{h} + \bar{E}_k. 
\end{equation} 
where $\bar{E}_k=\frac{1}{h}\int_{t_{k-1}}^{t_{k}} (s-t_{k-1}) \psi'''(s)ds$. 
\end{rem}

\begin{rem}
We further observe that the errors $E_k(\tau)$ and $\bar{E}_k$ contain the third derivative of the unknown function $\psi$, which obviously depends on oscillations $\omega_n$. Indeed, due to the structure of the equation (\ref{klein_duh}) expression $\partial_t^3\psi$ involves $\partial_tm$, so we formally have that
$$
\partial_t^3\psi=\mathcal{O}(\nabla^3\psi)+\mathcal{O}(\tilde{\omega}\psi).
$$
Given that $\partial_t^3\psi$ grows to infinity together with $\tilde{\omega}$, it is natural to expect that $E_k(\tau)$ and $\bar{E}_k$ would also grow to infinity and that these values would affect the overall error constant of approximation. However, as will be shown in Lemma \ref{omega_does_not_matter}   the error of the method does not depend on the oscillations.
\end{rem}

Application of (\ref{tay_3}) and (\ref{key}) in (\ref{duh6}),  (\ref{duh81}) leads directly to a numerical scheme $ \Xi^{[3]} $ presented in Table \ref{TABLE_1}, where $\psi_k$ and $\psi'_k$ are numerical approximations of $\psi(t_k)$ and $\psi’(t_k)$, respectively.

\begin{center} \colorbox{light-gray}{
	\hspace*{-20pt}\begin{tabular}{l}
		Numerical  scheme	$ \Xi^{[3]} $\\ 
		\hline \\
		$   \psi_0=\psi(t_0),\quad\psi'_0=\psi'(t_0),\quad\psi''_{0} =\Delta \psi(t_0) - m(t_0)\psi(t_0) $,\\ \smallskip 
		$\psi_1  = \cos (h G) \psi(t_0)  + G^{-1} \sin(h  G ) \psi'(t_0)  + \int_{0}^{h} G^{-1} \sin((h-\tau)  G )m(t_0+\tau)\left[\psi_0 + \tau \psi'_0 + \frac{\tau^2}{2}\psi''_{0}  \right] d\tau $ \\ \smallskip
		$ \psi'_1  = - G\sin(h G )\psi(t_0)  + \cos (h G) \psi'(t_0)   + \int_{0}^{h} \cos ((h-\tau) G) m(t_0+\tau)\left[\psi_0 + \tau \psi'_0 + \frac{\tau^2}{2}\psi''_{0} \right] d\tau $\\ \smallskip
		\textbf{for }$  k=1,N-1 $ \textbf{do }\\ \smallskip
		\hspace*{10pt}$ \psi_{k+1} = \cos (h G) \psi_k + G^{-1} \sin(h  G ) \psi'_k + \int_{0}^{h} G^{-1}\sin((h-\tau)  G )m(t_k+\tau)\left[\psi_k+ \tau \psi'_k+ \frac{\tau^2}{2h}\left(\psi'_k - \psi'_{k-1} \right)\! \right]\! d\tau $ \\ \smallskip
		\hspace*{10pt}$ \psi'_{k+1} = - G	\sin(h G )\psi_k + \cos (h G) \psi'_k + \int_{0}^{h} \cos ((h-\tau) G) m(t_k+\tau)\left[\psi_k+ \tau \psi'_k+ \frac{\tau^2}{2h}\left(\psi'_k - \psi'_{k-1} \right) \!\right] \!d\tau $ \\
		\textbf{end do} \\
		\hline 
\end{tabular}}
\captionof{table}{Algorithm $\Xi^{[3]} $ (time integration) for finding the approximate solution $\psi_k$ and $\psi'_k$ in the time interval $[t_0,T]$ at points $t_k=t_0+hk$, $h=(T-t_0)/N$. Important part of this pseudocode are the integrals. In the case when they cannot be computed analytically, we recommend Filon-type methods described in the further part of this paper. Practical approximation of the integrals is presented in Appendix \ref{app}. Discretisation in space is general and depends on boundary conditions. In the numerical examples, presented in Section \ref{EXPERIMENTS}, we assume periodic initial and boundary conditions and use Fourier collocation for space discretisation.
}
\label{TABLE_1}
\end{center}

\begin{rem}\label{uqadratures}
If the integrals appearing in numerical scheme $\Xi^{[3]}$ cannot be computed analytically, they need to be approximated. It is important to emphasize that the high performance of numerical approach $ \Xi^{[3]}$ and its third-order convergence are obtained only once we approximate the possibly highly-oscillatory integrals (containing $m(t_k+\tau)$) with a quadrature of order at least $4$ (locally) where the error constant does not depend on the oscillatory parameters $\omega_n$ and where no relation between $h$ and $\tilde{\omega}$ is required. For this reason standard  quadratures like Gauss--Legendre are unequal to the task (because their error constants depend  on the time derivatives of the integrants, in particular on the highly oscillatory part) and we need a more specialised approach. Here we propose to use the  Filon-type methods which obtain desired order in time step $h$, whose error constants do not depend on (possibly) large  $\tilde{\omega}$ and where no relation between $h$ and $\tilde{\omega}$ need be imposed.
\end{rem}

Filon methods are based on the one-dimensional oscillatory integral,
\begin{align}\nonumber
I_{\omega}[f] = 	\int_{a}^{b} f(s) g_{\omega}(s) ds
\end{align}
where $ g_{\omega}(s)  $ is an $ \omega $-oscillatory function, by which we broadly mean a function that oscillates rapidly in $ \omega $. Its organising principle is to  approximate  the non-oscillatory function $ f(s) $ by a polynomial $p(s)$ such that 
$$ \forall i \in \{ 0,1,...,r\} \quad  p^{(i)}(a) = f^{(i)}(a) , \quad p^{(i)}(b) = f^{(i)}(b). $$ 

For Filon-type methods we refer the reader to \cite{article}  and \cite{Iserles2005EfficientQO}, where they have been introduced and comprehensively analysed. 
In our case it is enough to use the quadratic polynomial,
\begin{equation}\label{filon}
p(s) = \frac{f'(h)-f'(0)}{2h}s^2+f'(0)s+f(0) .
\end{equation}

\begin{rem} \label{filon_1}
In the scheme $ \Xi^{[3]}$ we  apply a  (locally) fourth-order Filon-type method to the integrals of the form $\int_0^hf(s)\ee^{i\omega s}ds$. 
It is easy to verify that for $p(s)$ defined as in (\ref{filon}) we have
\begin{align}\nonumber
	|f(s) - p(s)| & \leq C h^3 \max_{\xi \in [0,h]} | f'''(\xi)|,
\end{align}
where $C$ is independent of $\omega\gg1$. This, in turn, means that
\begin{align}\label{Filon_explanation}
	\int_{0}^{h} f(s) \ee^{i\omega_n s} ds  &= \int_{0}^{h} \left[ \frac{f'(h)-f'(0)}{2h}s^2+f'(0)s+f(0)\right]  \ee^{i\omega s} ds + \OO(h^4) ,
\end{align}
where the integral on the right hand side can be evaluated explicitly by integration by parts.
\end{rem}

Note that, unless a Filon method (or similar highly oscillatory quadrature) is used, the method will fail unless the time step $h>0$ is minute, essentially $h<\tilde{\omega}^{-1}$. This phenomena is illustrated in Example 1, see Figure \ref{fig:0}.

\section{The convergence of the method}\label{CONVERGENCE}

It is not straightforward to conclude convergence and stability from the equations in scheme  $\Xi^{[3]}$, because naive calculations of global error lead to exponential growth in number of time steps taken in the interval $[t_0,T]$. For this reason we need to reformulate the equations (\ref{duh6}) and (\ref{duh81}).

\begin{thm}\label{equivalence}
Given that equation (\ref{klein_duh}) is investigated on time interval $[t_0,T]$, $K\cdot h=T$, $t_k=k h$,$ k=0,1,...,K $ and  $G \cdot G = - \Delta$, formulas  (\ref{duh6}) and (\ref{duh81}) are equivalent to 
\begin{align}\label{duh6together}
	\psi(t_k)  = &\cos (k h G ) \psi(t_0) + G^{-1} \sin(k h  G )  \psi'(t_0) + \\ \nonumber
	&\sum_{l=0}^{k-1}G^{-1} \int_{0}^{h} \sin(((k-l)h-\tau)  G )m(t_l+\tau)\psi(t_l+\tau) d\tau,\ k=1,\ldots,K.   \\ \label{duh81together}	
	\psi'(t_{k}) = & - G	\sin(khG) \psi(t_0)  + \cos(kh  G ) \psi'(t_0)  + \\ \nonumber
	&\sum_{l=0}^{k-1}\int_{0}^{h} \cos (((k-l)h-\tau) G) m(t_l+\tau)]\psi(t_l+\tau) d\tau,\ k=1,\ldots,K.
\end{align}
\end{thm}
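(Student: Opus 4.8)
The plan is induction on $k$, driven by the single-step recurrence (\ref{duh3}) together with the semigroup property of the propagator $R$. It is cleanest to keep the full $2\times2$ system rather than the decoupled scalar identities (\ref{duh6})--(\ref{duh81}): since $\cos(tG)$, $\sin(tG)$ and $G^{-1}$ are all functions of the single self-adjoint operator $G$ they commute, and the trigonometric addition formulas give $R(s)R(t)=R(s+t)$ for all $s,t$ --- equivalently, $R(t)=\exp(tA)$ for the fixed block matrix $A=\left[\begin{smallmatrix}0&\I\\ \Delta&0\end{smallmatrix}\right]$ of (\ref{dd}), so this is just the group law of the matrix exponential. I would record this as the one preliminary fact.

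Base case $k=1$: formulas (\ref{duh6together})--(\ref{duh81together}) with $k=1$ are literally (\ref{duh6})--(\ref{duh81}) at $k=0$, the sum collapsing to its single $l=0$ term. Inductive step: assume the accumulated identity holds at level $k$, written in vector form as $[\psi(t_k),\psi'(t_k)]^{T}=R(kh)[\psi(t_0),\psi'(t_0)]^{T}+\sum_{l=0}^{k-1}\int_0^h R((k-l)h-\tau)[0,m(t_l+\tau)\psi(t_l+\tau)]^{T}\dd\tau$. Substitute this for $[\psi(t_k),\psi'(t_k)]^{T}$ in the one-step formula (\ref{duh3}) advancing $t_k\mapsto t_{k+1}$. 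As $R(h)$ is independent of $\tau$, pull it inside the finite sum and the integrals; the semigroup property then collapses $R(h)R(kh)=R((k+1)h)$ and $R(h)R((k-l)h-\tau)=R((k+1-l)h-\tau)$. The extra integral produced by the single step, $\int_0^h R(h-\tau)[0,m(t_k+\tau)\psi(t_k+\tau)]^{T}\dd\tau$, is precisely the $l=k$ summand of the target formula at level $k+1$ (there $(k+1-l)h-\tau=h-\tau$), so the range becomes $l=0,\dots,k$. Reading off the two components yields (\ref{duh6together}) and (\ref{duh81together}) with $k$ replaced by $k+1$: the leading $G^{-1}$ in (\ref{duh6together}) is the $(1,2)$ entry of $R$, and the $-G\sin$ term in (\ref{duh81together}) is its $(2,1)$ entry.

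There is no real obstacle here --- the proof is essentially bookkeeping --- so the only points requiring care are (i) justifying that the operator factors genuinely commute and that $R(h)$ may be moved under the integral sign (immediate, since everything is a bounded function of $G$; after the spatial discretisation $G\mapsto G_d$ it is a statement about commuting matrices), and (ii) keeping the summation index straight, in particular recognising the freshly created single-step integral as the new top term $l=k$. A completely equivalent, non-inductive route is to iterate (\ref{duh3}) $k$ times at once: this produces $R(h)^k$ in front of the initial data and $R(h)^{k-1-l}R(h-\tau)$ inside the $l$-th integral, whereupon $R(h)^{j}=R(jh)$ telescopes the whole identity in one line. The converse direction --- recovering (\ref{duh6})--(\ref{duh81}) from (\ref{duh6together})--(\ref{duh81together}) --- follows by subtracting the level-$k$ identity from the level-$(k+1)$ identity and invoking the semigroup property once more.
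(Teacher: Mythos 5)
Your proof is correct and follows essentially the same route as the paper: induction on $k$, substituting the accumulated formula into the one-step Duhamel recurrence and absorbing the new integral as the $l=k$ summand. The only cosmetic difference is that you invoke the semigroup law $R(h)R(kh)=R((k+1)h)$ in block form, whereas the paper writes out its two component identities (the trigonometric addition formulas for $\cos$ and $G^{-1}\sin$) explicitly.
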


\begin{proof}
We proceed by induction. One can easily check that the formulas for $\psi(t_{1})$ and $\psi'(t_1)$  obtained by (\ref{duh6together}) and (\ref{duh81together}) are equivalent to these obtained by (\ref{duh6}) and (\ref{duh81}), respectively. Let us assume that the same holds for $\psi(t_{k})$ and $\psi'(t_k)$ for certain $1\leq k\leq K-1$. Then
\begin{align*}
	\psi(t_{k+1})  &= \cos (h G ) \psi(t_k)  + G^{-1} \sin(h  G ) \psi'(t_k) + \int_{0}^{h} G^{-1} \sin((h-\tau)  G )m(t_k+\tau) \psi(t_k+\tau) d\tau \\
	& = \cos (h G ) \left[ \cos (k h G ) \psi(t_0) + G^{-1} \sin(k h  G )  \psi'(t_0)\right]  \\ \nonumber
	& +  \cos (h G ) \sum_{l=0}^{k-1}G^{-1} \int_{0}^{h} \sin(((k-l)h-\tau)  G )m(t_l+\tau)\psi(t_l+\tau) d\tau \\
	&+G^{-1} \sin(h  G ) \left[- G	\sin(khG) \psi(t_0)  + \cos(kh  G ) \psi'(t_0) \right] \\ \nonumber
	&  + G^{-1} \sin(h  G )\sum_{l=0}^{k-1}\int_{0}^{h} \cos (((k-l)h-\tau) G) m(t_l+\tau)\psi(t_l+\tau) d\tau \\
	& + \int_{0}^{h} G^{-1} \sin((h-\tau)  G )m(t_k+\tau)\psi(t_k+\tau) d\tau. 
\end{align*}
$G$ being positive definite, we can apply the  trigonometric identities, 
\begin{align*}
	\cos (h G )\cos (k h G ) - [G^{-1} \sin(h  G )][ G	\sin(khG) ]&= \cos((k+1)hG )\\
	\cos (h G )G^{-1} \sin(k h  G )+ G^{-1} \sin(h  G ) \cos(kh  G ) & = G^{-1} \sin((k+1)h  G )
\end{align*} 
and conclude with 
\begin{align*}
	\psi(t_{k+1})  &= \cos((k+1)hG ) \psi(t_0) + G^{-1} \sin((k+1) h  G )  \psi'(t_0) \\ \nonumber
	& + \sum_{l=0}^{k}G^{-1} \int_{0}^{h} \sin(((k+1-l)h-\tau)  G )m(t_l+\tau) \psi(t_l+\tau) d\tau.
\end{align*}
This proves equivalence of the values of function $\psi$ obtained by (\ref{duh6}) and (\ref{duh6together}).  A similar result for $ \psi'$ obtained by (\ref{duh81}) and (\ref{duh81together}) can be shown in a similar manner.
\end{proof}

\begin{rem}\label{equivalence_short_method}
The equations from scheme $\Xi^{[3]}$ presented in Table \ref{TABLE_1} can be presented in the similar form to (\ref{duh6together}) and (\ref{duh81together}), namely
\begin{align}\label{eq10s6together}
	\psi_k  = &\cos (k h G ) \psi_0 + G^{-1} \sin(k h  G )  \psi'_0  \\ \nonumber
	+&G^{-1} \int_{0}^{h} \sin((kh-\tau)  G )m(t_0+\tau)\left[\psi_0 + \tau \psi'_0 +\frac{\tau^2}{2}\psi''_{0} \right] d\tau   \\ \nonumber
	+&\sum_{l=1}^{k-1}G^{-1} \int_{0}^{h} \sin(((k-l)h-\tau)  G )m(t_l+\tau)\left[\psi_l + \tau \psi'_l+ \frac{\tau^2}{2h}\left(\psi'_l - \psi'_{l-1} \right) \right] d\tau,   \\ \label{eq20stogether}	
	\psi'_k = & - G	\sin(khG) \psi_0  + \cos(kh  G ) \psi'_0   \\ \nonumber
	+&\int_{0}^{h} \cos ((kh-\tau) G) m(t_0+\tau)]\left[\psi_0 + \tau \psi'_0 +\frac{\tau^2}{2}\psi''_{0} \right] d\tau
	\\ \nonumber
	+&\sum_{l=1}^{k-1}\int_{0}^{h} \cos (((k-l)h-\tau) G) m(t_l+\tau)]\left[\psi_l + \tau \psi'_l+ \frac{\tau^2}{2h}\left(\psi'_l - \psi'_{l-1} \right) \right] d\tau,
\end{align}
where $k=2,\ldots,K$.
\end{rem}

\begin{thm}\label{local_error}
Let $E_k(\tau)=\int_{t_k}^{t_k+\tau} \frac{(t_k+\tau-s)^2}{2!} \psi'''(s)ds$, $k=0,\ldots,K-1$, $\bar{E}_0=0$, and $\bar{E}_k=\frac{1}{h}\int_{t_{k-1}}^{t_{k}} (s-t_{k-1}) \psi'''(s)ds$, $k=1,\ldots,K-1$. The global errors $\varepsilon_k:=\psi(t_k)-\psi_k$ and $\varepsilon_k':=\psi'(t_k)-\psi_k'$, $k=1,\ldots,K$, of method $\Xi^{[3]}$ can be expressed with formulas

\begin{align}\nonumber 
	\varepsilon_1=\,&\mathcal{R}_1,\\ \nonumber
	\varepsilon_{k}=\,&\varepsilon_{k-1} \\ \nonumber
	&\mbox{}+2G^{-1}\sin\left(\frac{h}{2}  G \right)\sum_{l=1}^{k-2} \int_{0}^{h} \cos\left((k-l)h-\frac{h}{2}-\tau)  G \right)m(t_l+\tau)\left[\varepsilon_{l}+\tau\left(1+\frac{\tau}{2h}\right)\varepsilon'_{l}-\frac{\tau^2}{2h}\varepsilon'_{l-1}\right]d\tau   \\ \nonumber
	\,&\mbox{}+\int_{0}^{h} G^{-1}\sin\left((h-\tau) G \right)m(t_{k-1}+\tau)\left[\varepsilon_{k-1}+\tau\left(1+\frac{\tau}{2h}\right)\varepsilon'_{k-1}-\frac{\tau^2}{2h}\varepsilon'_{k-2}\right]d\tau+ \mathcal{R}_k,\quad k\geq2,\\[5pt] \nonumber 
	\varepsilon'_1=\,&\mathcal{R}’_1,\\ \nonumber
	\varepsilon_{k}' =\,& \varepsilon_{k-1}'   \\ \nonumber
	\,&\mbox{}-2\sin\left(\frac{h}{2}  G \right)\sum_{l=1}^{k-2}\int_{0}^{h} \sin\left((k-l)h-\frac{h}{2}-\tau)  G \right) m(t_l+\tau)]\left[\varepsilon_{l}+\tau\left(1+\frac{\tau}{2h}\right)\varepsilon'_{l}-\frac{\tau^2}{2h}\varepsilon'_{l-1}\right]d\tau \\ \nonumber
	\,&\mbox{}+\int_{0}^{h} \cos\left((h-\tau) G \right) m(t_{k}+\tau)\left[\varepsilon_{k-1}+\tau\left(1+\frac{\tau}{2h}\right)\varepsilon'_{k-1}-\frac{\tau^2}{2h}\varepsilon'_{k-2}\right]d\tau+\mathcal{R}'_k,\quad k\geq2,
\end{align}
where
\begin{align}\nonumber 
	\mathcal{R}_{k}=\, & 2G^{-1}\sin\left(\frac{h}{2}  G \right)\sum_{l=1}^{k-2} \int_{0}^{h} \cos\left((k-l)h-\frac{h}{2}-\tau)  G \right)m(t_{l}+\tau)\left[\frac{\tau^2}{2}\bar{E}_{l}+E_{l}(\tau)\right] d\tau\\ \label{remainder}
	\,&\mbox{}+\int_{0}^{h} G^{-1}\sin((h-\tau)  G )m(t_{k-1}+\tau)\left[\frac{\tau^2}{2}\bar{E}_{k-1}+E_{k-1}(\tau)\right] d\tau,
\end{align}
\begin{align}\nonumber
	\mathcal{R}_{k}' =  & -2\sin\left(\frac{h}{2}  G \right)\sum_{l=1}^{k-2} \int_{0}^{h} \sin\left((k-l)h-\frac{h}{2}-\tau)  G \right)m(t_{l}+\tau)\left[\frac{\tau^2}{2}\bar{E}_{l}+E_{l}(\tau)\right] d\tau\\ \label{remainder_prim}
	\,&\mbox{}+ \int_{0}^{h} \cos((h-\tau)  G )m(t_{k-1}+\tau)\left[\frac{\tau^2}{2}\bar{E}_{k-1}+E_{k-1}(\tau)\right] d\tau
\end{align}
\end{thm}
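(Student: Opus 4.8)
The plan is to read off both error recursions from the \emph{closed-form} representations of Theorem~\ref{equivalence} and Remark~\ref{equivalence_short_method}, rather than from the one-step formulas, and then to telescope using elementary sum-to-product identities applied to the self-adjoint operator $G$. First I would subtract (\ref{eq10s6together}) from (\ref{duh6together}): since $\psi_0=\psi(t_0)$, $\psi'_0=\psi'(t_0)$ and $\psi''_0=\psi''(t_0)$ (the last by the definition of $\psi''_0$ in scheme $\Xi^{[3]}$ together with (\ref{klein_duh})), the free parts $\cos(khG)\psi(t_0)+G^{-1}\sin(khG)\psi'(t_0)$ cancel, leaving the purely integral identity
\begin{align*}
\varepsilon_k=\sum_{l=0}^{k-1}G^{-1}\int_0^h\sin\!\bigl(((k-l)h-\tau)G\bigr)\,m(t_l+\tau)\bigl[\psi(t_l+\tau)-P_l(\tau)\bigr]\,d\tau,
\end{align*}
with $P_0(\tau)=\psi_0+\tau\psi'_0+\tfrac{\tau^2}{2}\psi''_0$ and $P_l(\tau)=\psi_l+\tau\psi'_l+\tfrac{\tau^2}{2h}(\psi'_l-\psi'_{l-1})$ for $l\ge1$; the same computation with $G^{-1}\sin$ replaced by $\cos$ handles $\varepsilon'_k$ via (\ref{eq20stogether}) and (\ref{duh81together}). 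Inserting the Taylor identities (\ref{tay_3}) and, for $l\ge1$, (\ref{key}) gives
\begin{align*}
\psi(t_l+\tau)-P_l(\tau)=\varepsilon_l+\tau\Bigl(1+\tfrac{\tau}{2h}\Bigr)\varepsilon'_l-\tfrac{\tau^2}{2h}\varepsilon'_{l-1}+\tfrac{\tau^2}{2}\bar E_l+E_l(\tau),\qquad l\ge1,
\end{align*}
while for $l=0$ the difference is simply $E_0(\tau)$, consistent with the general formula once $\bar E_0=0$ and $\varepsilon_0=\varepsilon'_0=0$. This yields a non-recursive expression for $\varepsilon_k$ (and $\varepsilon'_k$) split into an error-propagating part carrying the triples $\varepsilon_l,\varepsilon'_l,\varepsilon'_{l-1}$ and a local-remainder part carrying the terms $\tfrac{\tau^2}{2}\bar E_l+E_l(\tau)$.

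To obtain the stated recursion I would then form $\varepsilon_k-\varepsilon_{k-1}$. The summand $l=k-1$ occurs only in $\varepsilon_k$ and contributes the single integral $\int_0^hG^{-1}\sin((h-\tau)G)m(t_{k-1}+\tau)[\cdots]\,d\tau$; for $0\le l\le k-2$ the two occurrences combine through $\sin\alpha-\sin\beta=2\sin\tfrac{\alpha-\beta}{2}\cos\tfrac{\alpha+\beta}{2}$ with $\alpha=((k-l)h-\tau)G$ and $\beta=((k-1-l)h-\tau)G$, so that $\tfrac{\alpha-\beta}{2}=\tfrac h2G$ and $\tfrac{\alpha+\beta}{2}=((k-l)h-\tfrac h2-\tau)G$. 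This identity is legitimate at the operator level because $G$ is self-adjoint and positive definite and one may argue in its eigenbasis, exactly as the angle-addition formulas were used in the proof of Theorem~\ref{equivalence}. It produces the common prefactor $2G^{-1}\sin(\tfrac h2G)$ and the kernel $\cos(((k-l)h-\tfrac h2-\tau)G)$; grouping the $\varepsilon$-terms gives the displayed recursion for $\varepsilon_k$ and grouping the remainder terms gives precisely $\mathcal R_k$ of (\ref{remainder}). The $\psi'$-recursion follows identically from $\cos\alpha-\cos\beta=-2\sin\tfrac{\alpha-\beta}{2}\sin\tfrac{\alpha+\beta}{2}$, which accounts for the minus sign and the replacement of $\cos$ by $\sin$ in the sum, and yields $\mathcal R'_k$ of (\ref{remainder_prim}). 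Finally the base cases $\varepsilon_1=\mathcal R_1$ and $\varepsilon'_1=\mathcal R'_1$ come from evaluating the non-recursive formula at $k=1$, where the sum over $l$ reduces to the single term $l=0$, which (since $\bar E_0=0$) equals $\mathcal R_1$, respectively $\mathcal R'_1$.

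The only genuinely delicate point is the bookkeeping of the index $l=0$: the scheme treats the first step with the \emph{exact} second derivative $\psi''_0$ rather than with a backward difference, so there is no $\varepsilon'_{-1}$, one must set $\bar E_0=0$, and one has to be scrupulous about whether the local remainder $E_0(\tau)$ is attached to the $l=0$ summand of the propagating part or folded into $\mathcal R_k$ (resp.\ $\mathcal R_1$). Everything else is routine algebraic reorganisation; crucially no estimates are needed at this stage, the purpose of the theorem being exactly to expose the bounded factor $G^{-1}\sin(\tfrac h2G)=\mathcal O(h)$ multiplying the accumulated error, which is what makes possible the discrete Gronwall-type argument of Lemma~\ref{omega_does_not_matter}.
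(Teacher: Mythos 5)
Your proposal is correct and follows essentially the same route as the paper's own proof: subtracting the telescoped Duhamel representations of Theorem \ref{equivalence} and Remark \ref{equivalence_short_method}, inserting the Taylor identities (\ref{tay_3}) and (\ref{key}) to split the bracket into propagating errors plus local remainders, and then differencing consecutive global errors with the sum-to-product identities in the eigenbasis of $G$ to extract the $2G^{-1}\sin(\frac{h}{2}G)$ factor. You also correctly flag the one delicate point (the $l=0$ term, $\bar E_0=0$, and where $E_0(\tau)$ is folded into $\mathcal{R}_k$), which is exactly where the paper's own bookkeeping requires the most care.
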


\begin{proof}
Based on the Theorem \ref{equivalence} and Remark \ref{equivalence_short_method} the global errors $\varepsilon_k$ and $\varepsilon'_k$ can be expressed as

\begin{align} \label{ve_formula}
	\varepsilon_k  = \,&G^{-1} \int_{0}^{h} \sin((kh-\tau)  G )m(t_0+\tau)\left[\psi(t_0+\tau)-\psi_0 - \tau \psi'_0 - \frac{\tau^2}{2}\psi''_0  \right]d\tau   \\ \nonumber
	\,&\mbox{}+\sum_{l=1}^{k-1}G^{-1} \int_{0}^{h} \sin(((k-l)h-\tau)  G )m(t_l+\tau)\left[\psi(t_l+\tau)-\psi_l-\tau\psi'_l- \frac{\tau^2}{2h}\left(\psi'_l - \psi'_{l-1} \right)\right]d\tau,   \\  \label{ve_formula_prim}	
	\varepsilon_k' = \,&\int_{0}^{h} \cos ((kh-\tau) G) m(t_0+\tau)]\left[\psi(t_0+\tau)-\psi_0 - \tau \psi'_0 - \frac{\tau^2}{2}\psi''_{0}  \right] d\tau
	\\ \nonumber
	\,&\mbox{}+\sum_{l=1}^{k-1}\int_{0}^{h} \cos (((k-l)h-\tau) G) m(t_l+\tau)]\left[\psi(t_l+\tau)-\psi_l-\tau\psi'_l- \frac{\tau^2}{2h}\left(\psi'_l - \psi'_{l-1} \right)\right]  d\tau.
\end{align}
Recalling the initial condition,  $\psi(t_0)=\psi_0$, $\psi'(t_0)=\psi'_0$, and  the fact that $\psi''_0$ is restored from the equation (cf.\ the first line of scheme $\Xi^{[3]}$), we  observe that
\begin{align}\nonumber
	\psi(t_0+\tau)-\psi_0-\tau\psi'_0-\frac{\tau^2}{2}\psi''_0\,&= E_0(\tau),\\ \nonumber
	\psi(t_{l}+\tau)-\psi_{l}-\tau\psi'_{l}- \frac{\tau^2}{2h}\left(\psi'_{l} - \psi'_{l-1} \right)\,&= \varepsilon_{l}+\tau\varepsilon'_{l}+\frac{\tau^2}{2h}\varepsilon'_{l}-\frac{\tau^2}{2h}\varepsilon'_{l-1}+\frac{\tau^2}{2}\bar{E}_{l}+E_{l}(\tau),\quad l=1,\ldots,k-1.\nonumber
\end{align}
Next, recalling that $\bar{E}_{0}=0$ and letting 
\begin{align}\label{remainders}
	R_{k,l}= \,& \int_{0}^{h} G^{-1}\sin(((k-l)h-\tau)  G )m(t_{l}+\tau)\left[\frac{\tau^2}{2}\bar{E}_{l}+E_{l}(\tau)\right] d\tau,\\ \nonumber
	R_{k,l}' = \,& \int_{0}^{h} \cos(((k-l)h-\tau)  G )m(t_{l}+\tau)\left[\frac{\tau^2}{2}\bar{E}_{l}+E_{l}(\tau)\right] d\tau
\end{align}
for $k=1,\ldots,K$ and $l=0,\ldots,k-1$, we easily conclude that
\begin{align}\nonumber 
	\varepsilon_k  = 
	\,&\sum_{l=1}^{k-1}G^{-1} \int_{0}^{h} \sin(((k-l)h-\tau)  G )m(t_l+\tau)\left[\varepsilon_{l}+\tau\left(1+\frac{\tau}{2h}\right)\varepsilon'_{l}-\frac{\tau^2}{2h}\varepsilon'_{l-1}\right]d\tau  + \sum_{l=0}^{k-1}R_{k,l},   \\ \nonumber
	\varepsilon_k' = 
	\,&\sum_{l=1}^{k-1}\int_{0}^{h} \cos (((k-l)h-\tau) G) m(t_l+\tau)]\left[\varepsilon_{l}+\tau\left(1+\frac{\tau}{2h}\right)\varepsilon'_{l}-\frac{\tau^2}{2h}\varepsilon'_{l-1}\right]  d\tau + \sum_{l=0}^{k-1}R'_{k,l}.
\end{align}
Given that
\begin{align}\nonumber  
	\sin(((k-l)h-\tau)  G )-\sin(((k-1-l)h-\tau)  G )\,&=2\sin\left(\frac{h}{2}  G \right)\cos\left((k-l)h-\frac{h}{2}-\tau)  G \right)\!, \\ \nonumber
	\cos(((k-l)h-\tau)  G )-\cos(((k-1-l)h-\tau)  G )\,&=-2\sin\left(\frac{h}{2}  G \right)\sin\left((k-l)h-\frac{h}{2}-\tau)  G \right)
\end{align}
and subtracting consecutive global errors (\ref{ve_formula}) and (\ref{ve_formula_prim}) their growth can be represented in the form
\begin{align}\nonumber 
	\!&\varepsilon_{k}-\varepsilon_{k-1}   \\ \nonumber
	\,&=\sum_{l=1}^{k-2} \int_{0}^{h} 2G^{-1}\sin\left(\frac{h}{2}  G \right)\cos\left((k-l)h-\frac{h}{2}-\tau)  G \right)m(t_l+\tau)\left[\varepsilon_{l}+\tau\left(1+\frac{\tau}{2h}\right)\varepsilon'_{l}-\frac{\tau^2}{2h}\varepsilon'_{l-1}\right]d\tau  \\ \nonumber
	\,&\mbox{}+\int_{0}^{h} G^{-1}\sin\left((h-\tau) G \right)m(t_{k-1}+\tau)\left[\varepsilon_{k-1}+\tau\left(1+\frac{\tau}{2h}\right)\varepsilon'_{k-1}-\frac{\tau^2}{2h}\varepsilon'_{k-2}\right]d\tau\\\nonumber
	\,&\mbox{}+ \sum_{l=0}^{k-2}\left[R_{k,l}-R_{k-1,l}\right]+ R_{k,k-1}\\ \nonumber 
	\,&\varepsilon_{k}' -\varepsilon_{k-1}' \\ \nonumber
	\,&=-\sum_{l=1}^{k-2}\int_{0}^{h} 2\sin\left(\frac{h}{2}  G \right)\sin\left((k-l)h-\frac{h}{2}-\tau)  G \right) m(t_l+\tau)]\left[\varepsilon_{l}+\tau\left(1+\frac{\tau}{2h}\right)\varepsilon'_{l}-\frac{\tau^2}{2h}\varepsilon'_{l-1}\right]d\tau \\ \nonumber
	\,&\mbox{}+\int_{0}^{h} \cos\left((h-\tau) G \right) m(t_{k}+\tau)\left[\varepsilon_{k-1}+\tau\left(1+\frac{\tau}{2h}\right)\varepsilon'_{k-1}-\frac{\tau^2}{2h}\varepsilon'_{k-2}\right]d\tau \\
	\,& + \sum_{l=0}^{k-2}\left[R'_{k,l}-R'_{k-1,l}\right]+ R'_{k,k-1}
\end{align}
where, telescoping
\begin{align}\nonumber 
	\mathcal{R}_{k}:=\sum_{l=0}^{k-2}\left[R_{k,l}-R_{k-1,l}\right]+ R_{k,k-1} \quad {\rm and} \quad \mathcal{R}'_{k}:=\sum_{l=0}^{k-2}\left[R'_{k,l}-R'_{k-1,l}\right]+ R'_{k,k-1},
\end{align}
we derive the claim of the lemma.
\end{proof}

\begin{lemma}\label{omega_does_not_matter}
Let $\mathcal{R}=\max_{k=1,\ldots,K}\|\mathcal{R}_k\|$ and $\mathcal{R}'=\max_{k=1,\ldots,K}\|\mathcal{R}_k’\|$, where $\mathcal{R}_k$ and $\mathcal{R}’_k$ are defined by (\ref{remainder}) and (\ref{remainder_prim}) in Theorem \ref{local_error}. Then
\begin{align}\nonumber
	\mathcal{R}&\leq Ch^3 \min\{h \tilde{\omega}, 1\}, \\ \nonumber
	\mathcal{R}'&\leq C\|G\|h^3 \min\{h \tilde{\omega}, 1\},
\end{align}
where $C$ depends on $N$, $\max_{n\leq N,t\in[t_0,T]}|a_n(t)|$,  $\max_{n\leq N,t\in[t_0,T]} |a_n’(t)|$,  $ \max_{t\in[t_0,T]}|\psi(t)|$,  $ \max_{t\in[t_0,T]}|\psi’(t)|$,  $ \max_{t\in[t_0,T]}||\nabla\psi’(t)||$ and  $T$. 
\end{lemma}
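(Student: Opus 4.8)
The plan is to estimate $\mathcal{R}_k$ and $\mathcal{R}'_k$ term by term, exploiting the fact that the remainder $\frac{\tau^2}{2}\bar{E}_l + E_l(\tau)$ carries three powers of $h$ unconditionally, while a competing bound carrying a factor of $h\tilde\omega$ comes from integration by parts against the oscillatory exponentials in $m$. First I would record the two elementary bounds on the Taylor remainders. Since $E_l(\tau)=\int_{t_l}^{t_l+\tau}\frac{(t_l+\tau-s)^2}{2!}\psi'''(s)\,ds$ and $\bar E_l=\frac1h\int_{t_{l-1}}^{t_l}(s-t_{l-1})\psi'''(s)\,ds$, for $\tau\in[0,h]$ we get $|E_l(\tau)|\le \frac{\tau^3}{6}\max|\psi'''|$ and $|\bar E_l|\le \frac h2\max|\psi'''|$, hence $\bigl|\frac{\tau^2}{2}\bar E_l + E_l(\tau)\bigr|\le C h^3\max_{[t_0,T]}|\psi'''|$. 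The subtlety flagged in the preceding remarks is that $\max|\psi'''|$ itself scales like $\tilde\omega$: from $\psi''=\Delta\psi+m\psi$ one differentiates once more to get $\psi'''=\Delta\psi'+m'\psi+m\psi'$, and $m'(t)=\sum_n\bigl(a_n'(t)+\mathrm{i}\omega_n a_n(t)\bigr)\ee^{\mathrm{i}\omega_n t}$ contributes a term of size $\tilde\omega$. So the naive estimate gives only $\mathcal{R}\le C h^3\tilde\omega\cdot h = Ch^4\tilde\omega$ locally; summed over $\OO(1/h)$ telescoped terms (there are $k-1\le K$ of them, and the $2G^{-1}\sin(\tfrac h2 G)$ prefactor is $\OO(h)$, which compensates the sum) this yields $\mathcal{R}\le Ch^3\cdot h\tilde\omega$, i.e. the bound $Ch^3\min\{h\tilde\omega,1\}$ on the branch where $h\tilde\omega\le 1$. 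I would make the telescoping-plus-prefactor cancellation explicit: in $\mathcal{R}_k$ the sum $\sum_{l=1}^{k-2}$ is multiplied by $2G^{-1}\sin(\tfrac h2 G)$ whose norm is $\le h$ uniformly (using $\|G^{-1}\sin(\tfrac h2 G)\|\le \tfrac h2$), so $\|\mathcal{R}_k\|\le h\cdot(k-2)\cdot h\cdot Ch^3\max|\psi'''| + Ch\cdot Ch^3\max|\psi'''|$, and with $(k-2)h\le T$ and $\max|\psi'''|\le C(1+\tilde\omega)$ this is $\le C h^3(1+h\tilde\omega)\le Ch^3\cdot Ch(1+\tilde\omega)$; absorbing constants gives the $h\tilde\omega$ branch.

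For the other branch, $\mathcal{R}\le Ch^3$ regardless of $\tilde\omega$, I would isolate the oscillatory part of $m$. Split $\frac{\tau^2}{2}\bar E_l+E_l(\tau)$ and, more importantly, split $\psi'''$ inside these remainders as $\psi'''=\mathcal{O}(\nabla^3\psi)+(\sum_n \mathrm{i}\omega_n a_n\ee^{\mathrm{i}\omega_n t})\psi+(\text{bounded})$, so that the only $\tilde\omega$-dangerous contribution to $E_l(\tau)$ is $\int_{t_l}^{t_l+\tau}\frac{(t_l+\tau-s)^2}{2}\sum_n\mathrm{i}\omega_n a_n(s)\ee^{\mathrm{i}\omega_n s}\psi(s)\,ds$. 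On this piece I would integrate by parts in $s$: each integration by parts against $\ee^{\mathrm{i}\omega_n s}$ produces a factor $\omega_n^{-1}$ that cancels the explicit $\omega_n$, at the cost of a boundary term and a derivative falling on $\frac{(t_l+\tau-s)^2}{2}a_n(s)\psi(s)$; the boundary term at $s=t_l+\tau$ vanishes because of the double zero of $(t_l+\tau-s)^2$ there, and at $s=t_l$ it also vanishes for the same reason, leaving an integral of size $\OO(\tau^2)$ with an $\tilde\omega$-independent constant — giving $|E_l(\tau)|\le C\tau^3$ with $C$ independent of $\tilde\omega$ on that branch. The same device applies to the $\ee^{\mathrm{i}\omega_n\tau}$ factors sitting in $m(t_l+\tau)$ that multiply the whole bracket: integrating by parts in $\tau$ against $\ee^{\mathrm{i}\omega_n\tau}$ exploits that the bracket $\frac{\tau^2}{2}\bar E_l+E_l(\tau)$ has a zero of order two at $\tau=0$, so again no bad boundary term at the lower endpoint; the $\tau=h$ boundary term contributes $\OO(h^3)$ and a factor $\omega_n^{-1}\le h$, and is absorbed. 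Combining, on the region $h\tilde\omega\ge 1$ both mechanisms deliver $\|\mathcal{R}_k\|\le Ch^3$, completing the $\min\{h\tilde\omega,1\}$ bound; the prime version is identical except that the kernel $G^{-1}\sin(\cdots G)$ is replaced by $\cos(\cdots G)$ and the stray $G^{-1}$ in front disappears, so one extra $\|G\|$ multiplies the leading $2\sin(\tfrac h2 G)$-weighted sum — hence the $\|G\|$ in the stated bound for $\mathcal{R}'$.

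Finally I would collect the dependence of the constant $C$: it enters through $N$ (the number of frequencies summed), through $\max_{n,t}|a_n(t)|$ and $\max_{n,t}|a_n'(t)|$ (from $m$ and $m'$ and the integration-by-parts terms), through $\max_t|\psi(t)|$ and $\max_t|\psi'(t)|$ (from $\psi'''$ after removing the $\omega_n$ factors), through $\max_t\|\nabla\psi'(t)\|$ (the genuinely $\tilde\omega$-independent $\Delta\psi'$ part of $\psi'''$), and through $T$ (from $(k-2)h\le T$ in the telescoped sum). The main obstacle is the second branch: making the integration-by-parts argument airtight requires being careful that every boundary term that is not killed by a structural zero is instead killed by a compensating $\omega_n^{-1}$ factor, and that the derivatives produced by repeated integration by parts only ever hit the smooth, non-oscillatory data $a_n,\psi$ (whose derivatives are $\tilde\omega$-independent by hypothesis) and the explicit polynomial weights — never reintroducing an $\omega_n$. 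Once that bookkeeping is pinned down, the two branches combine into the single estimate $\mathcal{R}\le Ch^3\min\{h\tilde\omega,1\}$ and $\mathcal{R}'\le C\|G\|h^3\min\{h\tilde\omega,1\}$ as claimed.
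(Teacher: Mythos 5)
Your proposal follows essentially the same route as the paper: split $\psi'''$ into a non-oscillatory part and the part carrying the explicit factor $\ii\omega_n$ from $\partial_t m$, integrate the latter by parts against $\ee^{\ii\omega_n s}$ to trade $\omega_n$ for $\omega_n^{-1}$, and then combine the resulting uniform-in-$\tilde\omega$ bound on $\frac{\tau^2}{2}\bar E_l+E_l(\tau)$ with the $\OO(h)$ prefactor $\|2G^{-1}\sin(\frac{h}{2}G)\|\leq h$ (resp.\ $\|2\sin(\frac{h}{2}G)\|\leq h\|G\|$, whence the extra $\|G\|$) and the telescoped sum of $\OO(T/h)$ terms. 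The first branch ($h\tilde\omega\leq1$) and the bookkeeping of the constant are fine.

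There is, however, one concretely false step in the second branch. You claim that after integrating $\int_{t_l}^{t_l+\tau}\frac{(t_l+\tau-s)^2}{2}\sum_n\ii\omega_n a_n(s)\ee^{\ii\omega_n s}\psi(s)\,ds$ by parts, the boundary term at $s=t_l$ "also vanishes for the same reason" as the one at $s=t_l+\tau$. It does not: the kernel $(t_l+\tau-s)^2$ has its double zero only at the upper endpoint, and at $s=t_l$ it equals $\tau^2$, so the lower boundary term survives and equals $-\frac{\tau^2}{2}\sum_n a_n(t_l)\psi(t_l)\ee^{\ii\omega_n t_l}$. Consequently the uniform-in-$\tilde\omega$ bound on the oscillatory part of $E_l(\tau)$ is $\OO(\tau^2)$, not the $\OO(\tau^3)$ you assert (this is exactly the paper's $E_k(\tau)\leq C\min\{\tau^3\tilde\omega,\tau^2\}$). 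The same issue, which you do not address at all, arises for $\bar E_l$: its kernel $(s-t_{l-1})$ vanishes at the lower endpoint but not at $s=t_l$, so the oscillatory part of $\bar E_l$ is only $\OO(1)$ uniformly in $\tilde\omega$, not $\OO(h)$. Fortunately these weaker (correct) bounds still give $\frac{\tau^2}{2}\bar E_l+E_l(\tau)=\OO(h^2)$ uniformly in $\tilde\omega$, and $h\cdot(T/h)\cdot h\cdot Mh^2=TMh^3$ recovers the claimed $Ch^3$ on the branch $h\tilde\omega\geq1$ — so the lemma survives, but your intermediate assertion must be corrected. Your additional integration by parts in $\tau$ against the $\ee^{\ii\omega_n\tau}$ inside $m(t_l+\tau)$ is superfluous for this lemma (the paper simply bounds $|m|\leq M$ here; exploiting that oscillation is the job of the Filon quadrature, not of this estimate) and can be dropped.
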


\begin{proof}
Functions $\mathcal{R}_k$ and $\mathcal{R}_k'$, $k=1,\ldots,K$, depend on the errors $E_k(\tau)$ and $\bar{E}_k$, which in turn depend on the third derivative of the unknown function,
$$
\psi'''(s)=\Delta\psi'(s)+\ii \sum_{n=1}^N \omega_n a_n(s)e^{\ii\omega_n s}\psi(s)+\sum_{n=1}^N a_n'(s)e^{\ii\omega_n s}\psi(s)+\sum_{n=1}^N a_n(s)e^{\ii\omega_n s}\psi'(s).
$$
The second term of the above expression involves multiplications by $\omega_n s$, which require careful attention. For this reason we write  $E_k(\tau)=E^0_k(\tau)+E^\omega_k(\tau)$, $k=0,\ldots,K$, where 
\begin{align}\nonumber
	E^0_k(\tau) & = \int_{t_k}^{t_k+\tau} \dfrac{(t_k+\tau-s)^2}{2!} \left[\Delta\psi'(s)+\sum_{n=1}^N a_n'(s)e^{\ii\omega_n s}\psi(s)+\sum_{n=1}^N a_n(s)e^{\ii\omega_n s}\psi'(s)\right]ds,\\ \nonumber
	E^\omega_k(\tau) & = \int_{t_k}^{t_k+\tau} \dfrac{(t_k+\tau-s)^2}{2!}  \sum_{n=1}^N \ii\omega_n a_n(s)e^{\ii\omega_n s}\psi(s)ds,
\end{align}
and  $\bar{E}_0=0$, $\bar{E}_k=\bar{E}^0_k+\bar{E}_k^\omega$, $k=1,\ldots,K$, where
\begin{align}\nonumber
	\bar{E}_k^0(\tau) & = \frac{1}{h}\int_{t_{k-1}}^{t_{k}} (s-t_{k-1}) \left[\Delta\psi'(s)+\sum_{n=1}^N a_n'(s)e^{\ii\omega_n s}\psi(s)+\sum_{n=1}^N a_n(s)e^{\ii\omega_n s}\psi'(s)\right]ds,\\ \nonumber
	\bar{E}_k^\omega & = \frac{1}{h}\int_{t_{k-1}}^{t_{k}} (s-t_{k-1}) \sum_{n=1}^N \ii \omega_n a_n(s)e^{\ii\omega_n s}\psi(s)ds.
\end{align}
It is trivial to observe, that $E_k^0(\tau)\sim\tau^3C_k^0(\tau)$, $k\geq0$, and  $\bar{E}_k^0(\tau)\sim h\bar{C}_k^0$, ($k\geq1$), where $C_k^0(\tau)$ and $\bar{C}_k^0$ do not depend on the $\omega_n$s. One can also easily notice that $E_k^\omega(\tau)\sim\tau^3\sum_{n=1}^N \omega_n C_k^\omega(\tau)$ and that $\bar{E}_k^0(\tau)\sim h\sum_{n=1}^N \omega_n\bar{C}_k^\omega$, where $C_k^\omega(\tau)$ and $\bar{C}_k^\omega$ also do not depend on frequencies $\omega_n$ either. Integration by parts of $E_k^\omega(\tau)$ yields 
\begin{align} \nonumber
	&\sum_n \dfrac{ \ii \omega_n}{2!} \int_{t_k}^{t_k+\tau} (t_k+\tau-s)^2a_n(s)e^{\ii\omega_n s}\psi(s)ds
	\\
	\nonumber
	=& \left| \begin{array}{ll}
		(t_k+\tau-s)^2a_n(s)\psi(s)	&  e^{\ii\omega_n s}\\
		-2(t_k+\tau-s)a_n(s)\psi(s)+(t_k+\tau-s)^2(a_n(s)\psi(s))'_s		& \frac{1}{i \omega_n} e^{i \omega_n s}
	\end{array}\right|	 \\
	\nonumber
	=& \dfrac{1}{2!}\sum_n \left[-\tau^2a_n(t_k)\psi(t_k)e^{\ii\omega_n{t_k}} +\int_{t_k}^{t_k+\tau}2(t_k+\tau-s)a_n(s)\psi(s)e^{i \omega_n s}ds\right] \\
	\nonumber
	&\mbox{}-\frac{1}{2!}\sum_n \int_{t_k}^{t_k+\tau}(t_k+\tau-s)^2(a_n(s)\psi(s))'_se^{i \omega_n s}ds.
\end{align}
We represent $\bar{E}_k^\omega$ in a similar manner. This leads to the conclusion that $E_k^\omega(\tau)\sim\tau^2 C_k^1(\tau)$ and that $\bar{E}_k^0(\tau)\sim \bar{C}_k^1$, where $C_k^1(\tau)$ and $\bar{C}_k^1$  do not depend on frequencies $\omega_n$. 

Concluding,
$$
E_k(\tau)\leq C\min\{\tau^3  \tilde{\omega}, \tau^2\}  \quad {\rm and} \quad \bar{E}_k\leq C\min\{h\tilde{\omega}, h^0\}, \quad k\geq1,
$$
where the constant $C$ is independent of $\omega_n$s but depends on benign quantities like $N$,  $\max_{n\leq N,t\in[t_0,T]}|a_n(t)|$,  $\max_{n\leq N,t\in[t_0,T]} |a_n’(t)|$,  $ \max_{t\in[t_0,T]}|\psi(t)|$,  $ \max_{t\in[t_0,T]}|\psi’(t)|$,  $ \max_{t\in[t_0,T]}||\nabla\psi’(t)||$.


Next we focus our attention on the structure of $\mathcal{R}_k$ and $\mathcal{R}'_k$. Obviously 

$$
\frac{\tau^2}{2}\bar{E}_{k}+E_{k}(\tau)\sim\min\{h^3 \sum_{n=1}^N \omega_n, h^2\}\tilde{C}_k,\ k\geq0, 
$$
Moreover we  observe that 
$\|2G^{-1}\sin\left(\frac{h}{2}  G \right)\|\leq h$ and $\|2\sin\left(\frac{h}{2}  G \right)\|\leq h\|G\|$. Equipped with this estimates we easily conclude the claim of the Lemma.
\end{proof}

\begin{rem}
Obviously the unboundedness of operator $G$ may raise concerns, because of the presence of  $\|G\|$ in the estimate. We recall, though, that the theme of this paper is  time integration and that eventually space discretization must be combined with our narrative. In practice, this means that in applications $G$ will be approximated with some bounded operator $G_d$, in particular that $\|G_d\|$ will be bounded.
\end{rem}

Finally we are ready to formulate and prove the theorem on the convergence of our method.

\begin{thm}\label{convergence_short_method}
The numerical method $\Xi^{[3]}$ is convergent, its global error $\|u(T)-u_T\|$ does not grow with $\tilde{\omega}$ and it exhibits third (global) order of accuracy in time, meaning that $\|u(T)-u_T\| \leq C h^3$ as $h\rightarrow 0$. 
\end{thm}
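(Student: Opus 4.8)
The plan is to feed the exact error recursion of Theorem~\ref{local_error} into a discrete Gr\"onwall inequality, controlled by the $\tilde\omega$-uniform remainder bounds of Lemma~\ref{omega_does_not_matter}. Throughout, the unbounded $G$ is understood, as in the scheme, to be the symmetric positive-definite spatial discretisation $G_d$, so that $\|\cos(sG_d)\|\le1$, $\|G_d^{-1}\sin(sG_d)\|\le\min\{|s|,\|G_d^{-1}\|\}$, $\|2G_d^{-1}\sin(\tfrac h2G_d)\|\le h$ and $\|2\sin(\tfrac h2G_d)\|\le h\|G_d\|$ for all $s$, while $|m(t)|\le\mu:=\sum_{n=1}^N\max_t|a_n(t)|$ and, for $\tau\in[0,h]$, $\tau(1+\tfrac{\tau}{2h})\le\tfrac32h$ and $\tfrac{\tau^2}{2h}\le\tfrac h2$.

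First I would reduce the vector recursion of Theorem~\ref{local_error} to a scalar one. Set $e_k:=\|\varepsilon_k\|+\|\varepsilon_k'\|$, $\mathcal E_k:=\max_{0\le j\le k}e_j$, $e_0=0$. Feeding the bounds above into the formulas for $\varepsilon_k-\varepsilon_{k-1}$ and $\varepsilon_k'-\varepsilon_{k-1}'$: each integral over $[0,h]$ costs a factor $h$; each $\varepsilon$-bracket is $\le C_0(e_l+e_{l-1})$; each inner sum runs over at most $K=T/h$ indices; hence, using $kh^2\le Th$ for the ``sum$\times$integral'' terms and $\|\cos((h-\tau)G_d)\|\le1$ for the remaining single $\int_0^h\cos((h-\tau)G_d)m(t_k+\tau)[\cdots]\,d\tau$ term in the $\varepsilon_k'$ line, the whole propagation part of $e_k-e_{k-1}$ is $\le C_1(1+\|G_d\|)h\,\mathcal E_{k-1}$. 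With $e_1=\|\mathcal R_1\|+\|\mathcal R_1'\|$ this gives
\begin{equation*}
\mathcal E_k\le(1+C_2h)\,\mathcal E_{k-1}+\|\mathcal R_k\|+\|\mathcal R_k'\|,\qquad k=1,\dots,K,
\end{equation*}
with $C_2$ depending only on $T$, $\|G_d\|$, $\mu$; unrolling and using $(1+C_2h)^K\le\ee^{C_2T}$ yields $\|u(T)-u_T\|\le\mathcal E_K\le\ee^{C_2T}\sum_{k=1}^K(\|\mathcal R_k\|+\|\mathcal R_k'\|)$.

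The only non-routine step is to show that this accumulated remainder is $O(h^3)$ \emph{uniformly in} $\tilde\omega$: the crude bound $\sum_k\|\mathcal R_k\|\le K(\mathcal R+\mathcal R')$ combined with Lemma~\ref{omega_does_not_matter} gives only $O(h^2)$, so genuine cancellation must be used. I would undo the telescoping of Theorem~\ref{local_error}: with $R_{k,l}$ as in (\ref{remainders}) one checks $\sum_{k=1}^K\mathcal R_k=\sum_{l=0}^{K-1}R_{K,l}$ (likewise with primes), and the substitution $s=lh+\tau$ turns these into the single integrals
\begin{equation*}
\sum_{k=1}^K\mathcal R_k=\int_0^{T}G_d^{-1}\sin\bigl((T-s)G_d\bigr)m(s)g(s)\,ds,\qquad \sum_{k=1}^K\mathcal R_k'=\int_0^{T}\cos\bigl((T-s)G_d\bigr)m(s)g(s)\,ds,
\end{equation*}
where $g$ is the piecewise quadratic-interpolation error intrinsic to $\Xi^{[3]}$: on $[t_l,t_{l+1})$ it equals $\tfrac{\tau^2}{2}\bar E_l+E_l(\tau)$ with $\tau=s-t_l$, so $g(t_l)=0$, $g'(t_l^+)=0$, and $\|g\|_{L^\infty}=O(\min\{h^3\tilde\omega,h^2\})$ by the computation inside the proof of Lemma~\ref{omega_does_not_matter}. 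In the regime $h\tilde\omega\lesssim1$ the bound $\|g\|_{L^\infty}=O(h^3)$ together with $|m|\le\mu$ already yields $O(h^3)$. In the oscillatory regime I would integrate by parts twice on each subinterval, using $g(t_l)=g'(t_l^+)=0$ to absorb the oscillatory factor $\ee^{\ii\omega_n s}$ of $m$ as in Remark~\ref{filon_1}; the two powers of $\tilde\omega^{-1}$ gained compensate the $\tilde\omega$ hidden in $\|g\|_{L^\infty}$, and a summation by parts of the resulting boundary contributions across the $K$ subintervals supplies the missing power of $h$, leaving everything $O(\|G_d\|h^3)$ with a constant that, exactly as in Lemma~\ref{omega_does_not_matter}, depends only on $N$, $T$ and the $\tilde\omega$-independent norms of $a_n$, $a_n'$, $\psi$, $\psi'$, $\nabla\psi'$. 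Substituting back gives $\|u(T)-u_T\|\le Ch^3$ with $C$ independent of $\tilde\omega$ — which is both the third-order rate and the asserted robustness.

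The main obstacle is precisely this last estimate in the oscillatory regime. A term-by-term treatment of the remainder recursion loses one power of $h$, so order $3$ hinges on recognising the accumulated remainder as a \emph{single} Filon-type oscillatory integral of the interpolation error $g$ against a bounded kernel times $m$, and on the cancellation — integration by parts, then summation by parts — that this structure unlocks; this is the same mechanism underlying Lemma~\ref{omega_does_not_matter}. Once that bound is available, the operator-norm estimates on the propagators and the discrete Gr\"onwall iteration are entirely routine.
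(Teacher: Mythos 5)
Your first two paragraphs reproduce, in a slightly cleaner form, exactly the paper's argument: feed the recursion of Theorem~\ref{local_error} into a discrete Gr\"onwall inequality using $\|2G^{-1}\sin(\tfrac h2G)\|\le h$ and $\|2\sin(\tfrac h2G)\|\le h\|G\|$, then invoke Lemma~\ref{omega_does_not_matter}. Up to that point the proposal is correct and already yields the theorem in the sense the paper proves it. Where you go astray is the diagnosis that the ``crude'' accumulation gives ``only $O(h^2)$''. Lemma~\ref{omega_does_not_matter} gives $\mathcal R'\le C\|G\|h^3\min\{h\tilde\omega,1\}$, so $K\mathcal R'\le CT\|G\|\min\{h^3\tilde\omega,h^2\}$, which is \emph{precisely} the final estimate (\ref{estimate}) of the paper: uniformly bounded in $\tilde\omega$ (by $Ch^2$), and for each fixed $\tilde\omega$ equal to $C\tilde\omega h^3=O(h^3)$ as $h\to0$. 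That is how the theorem's two claims are meant; the paper neither asserts nor proves a bound $Ch^3$ with $C$ independent of $\tilde\omega$. The only cancellation used is the telescoping already built into Theorem~\ref{local_error}, which converts $R_{k,l}-R_{k-1,l}$ into a term carrying the extra factor $\|2G^{-1}\sin(\tfrac h2G)\|\le h$.

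Your third paragraph, aimed at a genuinely $\tilde\omega$-uniform $O(h^3)$, is therefore not needed for the stated theorem, and as sketched it does not close. Structurally, once you have bounded $\mathcal E_K$ by $\sum_k(\|\mathcal R_k\|+\|\mathcal R_k'\|)$ you can no longer exploit cancellation across $k$; you would have to restart from the untelescoped representation (\ref{ve_formula})--(\ref{ve_formula_prim}), where $\sum_{l=0}^{K-1}R_{K,l}$ does appear as a single vector. More decisively, the integration-by-parts/summation-by-parts step fails near resonances. After two integrations by parts on $[t_l,t_{l+1}]$ the surviving right-endpoint contributions are $(\ii\omega_n)^{-1}\ee^{\ii\omega_n t_{l+1}}G^{-1}\sin((T-t_{l+1})G)\,a_n(t_{l+1})\,g(t_{l+1}^-)$ with $g(t_{l+1}^-)=\tfrac{h^2}{2}\bar E_l+E_l(h)=O(h^2)$ and no sign structure; when $\omega_nh\in2\pi\mathbb Z$ (so $\omega_n\sim h^{-1}$) all phases coincide, Abel summation gains nothing, and the $K=T/h$ terms give only $Th^{-1}\cdot\omega_n^{-1}\cdot h^2=O(h^2)$. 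Likewise the residual $\omega_n^{-2}\int_0^T|g''|\,ds$ term, with $|g''(s)|=|\psi''(s)-\psi''(\xi_l)|=O(\min\{h\tilde\omega,1\})$, is only $O(h/\tilde\omega)=O(h^2)$ for $\tilde\omega\sim h^{-1}$. So the claimed uniform third-order bound is not established (and in the resonant regime it is not clear it is even true); what the argument actually delivers is the paper's $\min\{h^3\tilde\omega,h^2\}$.
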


\begin{proof}

Let $M:=\max_{t\in[t_0,T]}|m(t)|$. Based on the Theorem \ref{local_error} and Lemma \ref{omega_does_not_matter} we obtain for  $k\geq2$ the following inequalities
\begin{align}\nonumber
	\|\varepsilon_k\|  \leq  & \|\varepsilon_{k-1}\|+M\Big(h\|\varepsilon_{k-1}\|+h^2\|\varepsilon'_{k-1}\|+h^2\|\varepsilon'_{k-2}\| \Big)+\mathcal{R},\\ \nonumber
	\|\varepsilon'_k\|  \leq  & \|\varepsilon'_{k-1}\|+M\|G\|\Big(h\|\varepsilon_{k-1}\|+h^2\|\varepsilon'_{k-1}\|+h^2\|\varepsilon'_{k-2}\| \Big)+\mathcal{R}'.
\end{align}
Let now $\rho_k$ and $\rho_k'$ be the upper bounds of $\|\varepsilon_k\|$ and $\|\varepsilon_k'\|$ respectively, $k=0,\ldots,K$, therefore
\begin{equation}\label{rho_0}
	\rho_0= 0,\quad 
	\rho_0'= 0,\quad
	\rho_1= \|\mathcal{R}_1\|\leq\mathcal{R},\quad 
	\rho_1'= \|\mathcal{R}’_1\|\leq\mathcal{R}’,
\end{equation}
and for $k\geq2$
\begin{align}\label{rho_1}
	\rho_k  =  & \rho_{k-1}+M\Big(h\rho_{k-1}+h^2\rho'_{k-1}+h^2\rho'_{k-2} \Big)+\mathcal{R},\\ \label{rho_2}
	\rho'_k  =  & \rho'_{k-1}+M\|G\|\Big(h\rho_{k-1}+h^2\rho'_{k-1}+h^2\rho'_{k-2} \Big)+\mathcal{R}'.
\end{align}
We conclude that $\|\varepsilon_{k}\|\leq\rho_{k}$, $\|\varepsilon'_{k}\|\leq\rho'_{k+1}$, $\rho_{k}\leq\rho_{k+1}$, $\rho'_{k}\leq\rho'_{k+1}$ and that  the following inequalities are satisfied:
\begin{align}\nonumber
	\rho_k  \leq  & \rho_{k-1}+M\left(h\rho_{k-1}+2h^2\rho'_{k-1}\right)+\mathcal{R},\\ \nonumber
	\rho_k'  \leq  & \rho_{k-1}'+M\|G\|\left(h\rho_{k-1}+2h^2\rho'_{k-1}\right)+\mathcal{R}'. 
\end{align}\\
Let us recall now that $\|G\|\geq1$. Based on (\ref{rho_0})-(\ref{rho_2}) we observe that $\mathcal{R}\leq\mathcal{R}’$, $\rho_k\leq\rho'_k $ which in turn means that
\begin{align}\nonumber
	\rho'_k  \leq  &\left(1+M\|G\|(h+2h^2)\right)\rho'_{k-1}+\mathcal{R}’.   
\end{align}

	\begin{align}\nonumber
		\rho'_K  \leq  & \mathcal{R}'\sum_{j=0}^{K-1}\left(1+(h+2h^2)M\|G\|\right)^{j}\leq \mathcal{R}'\sum_{j=0}^{K-1}\left(1+M\|G\|3h\right)^j 	=  \mathcal{R}' \frac{1-\left(1+M\|G\|3h\right)^{K}}{1-\left(1+M\|G\|3h\right)} \\ \nonumber
		= & \mathcal{R}' \frac{-\sum_{j=1}^{K} {K \choose j}\left(M\|G\|3h\right)^{j}}{-M\|G\|3h} 	= \mathcal{R}' \frac{-\sum_{j=2}^{K} {K \choose j}\left(M\|G\|3h\right)^{j}-{K \choose 1}\left(M\|G\|3h\right)}{-M\|G\|3h}  \\ \nonumber
		&=   \mathcal{R}' K +\mathcal{R}' \frac{-\sum_{j=2}^{K} {K \choose j}\left(M\|G\|3h\right)^{j}}{-M\|G\|3h} = 	\mathcal{R}'\left(K  +3TM\|G\|\mathcal{O}(h^0)\right),
	\end{align}

and  
\begin{align}\label{estimate}
	\|\varepsilon_K\|\leq \|\varepsilon’_K\|\leq \rho'_K  \leq  & \ TC\|G\|\min\left\{h^3\tilde{\omega},h^2\right\},
\end{align}
where $C$ depends on $N$,$M$, $\max_{n\leq N,s\in[t_0,T]}|a_n(s)|$, $\max_{n\leq N,s\in[t_0,T]}|a_n'(s)|$, $\max_{s\in[t_0,T]}|\psi(s)|$, $\max_{s\in[t_0,T]}|\psi'(s)|$, $\max_{s\in[t_0,T]}|\psi''(s)|$ and on $\max_{s\in[t_0,T]}\|\nabla\psi'(s)\|$.

This means that global error of the method $\|\psi(T)-\psi_T\|$ is bounded uniformly in $\tilde{\omega}$ by at least $h^2$ and that in the limit as $h\rightarrow0$ we obtain third order global error.
\end{proof}


\section{Numerical examples}\label{EXPERIMENTS}

We thereby focus on two equations. In Example 1  we  consider the wave equation with a highly oscillating term of single frequency  $\omega$  and discuss behaviour of numerical methods with regard to the size of this parameter. We are concerned with the effectiveness of scheme $\Xi^{[3]}$ depending on the type of quadratures used for approximation of the integrals appearing in the scheme. Moreover we  compare various numerical approaches and show that each of them is effective in a different oscillatory regime. In Example~2 we  consider the wave equation with multiple frequencies and run comparisons of different time stepping methods. It will be seen that the proposed third-order method performs exceedingly well in this setting. 

In all the  experiments we assume that the solution is confined to the spatial interval $ [x_0 , x_M ]$ and assume periodic boundary conditions. We divide the interval into $ M = 200 $ sub-intervals of length $ \Delta  x = (x_M - x_0)/M$ and discretise spatial operators with Fourier collocation method, described in detail in \cite{kopriva}.
As  a reference solution we apply to the semi-discretised equations a 6th order method based on self-adjoint basis of Munthe-Kaas  \& Owren \cite{blanesros}  with a  step size $ h = 10^{-6} $. We carry out  numerical integration for presented method using various time steps and measure the $ l_2 $ absolute error at the final time $T$. The errors are plotted in  double-logarithmic scale.\\

\noindent
\textit{Example 1.} \\

\noindent Let us consider the following wave equation with time- and space-dependent potential 
\begin{eqnarray}\label{example_3}
\partial_t^2 u &=&  \partial_x^2 u - \left[  1+\cos (\omega t) \right] x^2 u, \quad x\in [-10,10] , \quad t \in [0, 1], \\
\nonumber
u(x,0) &=& e^{- \frac{x^2}{2}} , \quad \partial_t u (x,0) = 0, \\
\nonumber
u(-10,t) &=& u(10,t).
\end{eqnarray}

In Figure \ref{fig:0}   parameter $\omega$ takes values $10^3$ (on the left), $10^4$ (in the centre) and $10^5$ (on the right), respectively. The solution to the equation (\ref{example_3}) is approximated with the $\Xi^{[3]}$ method, with the integrals  computed with the fourth-order Filon method (as presented in Remark \ref{filon_1}). The derived solution is compared against the results of the $\Xi^{[3]}$ method complemented by Gauss--Legendre quadratures of fourth, sixth and eighth order, respectively. As can be seen, even high orders of Gauss--Legendre quadratures fall well short of the third (globally) order method unless the time step $h$ is significantly smaller than the frequency $\omega$. This confirms our claim that Filon methods (or other highly oscillatory quadrature methods) need  be applied in the presence of  high oscillation.

\begin{figure}[tbh]
\centering
\includegraphics[width = 1\linewidth]{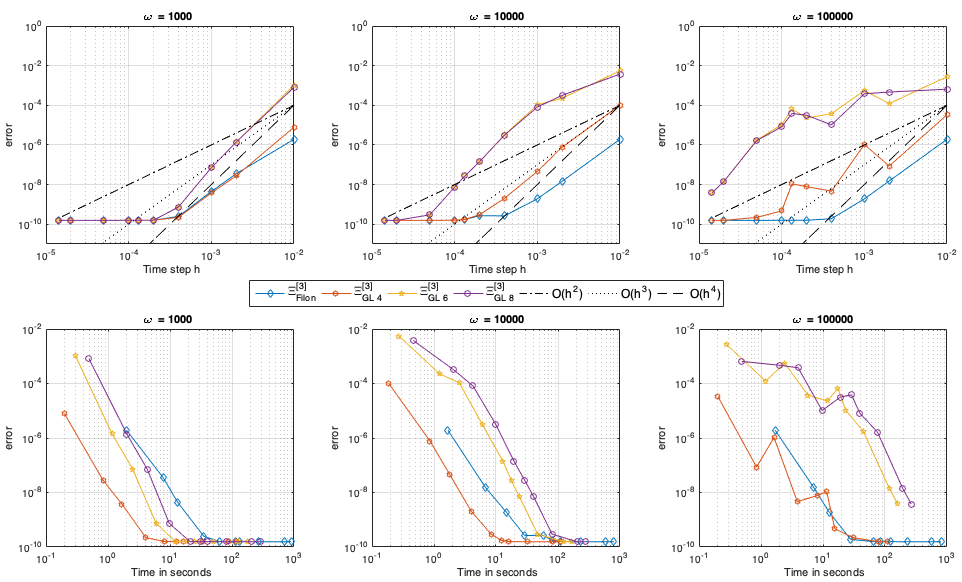}
\caption{Accuracy and time of computation in seconds of the numerical solution to problem (\ref{example_3}), where results have been obtained with $\Xi^{[3]}$ method with applications of Filon-type method and high order Gauss--Legendre quadratures.}
\label{fig:0}
\end{figure} 

\newpage

In Figure \ref{fig:4} we  compare the method $\Xi^{[3]}$ against other schemes from the literature, like well known 2nd and 4th order Runge--Kutta methods (${\rm RK}^{[2]} $ and ${\rm RK}^{[4]} $). We will also consider the 4th-order method $ \Sigma_{3c}^{[4] }$ from \cite{baderblanes} (denoted here as ${\rm BBCK}^{[4]}$) which is designed for non-oscillatory potentials, and  the 3rd-order asymptotic method from \cite{asympt} (denoted by ${\rm Asympt}^{[3]}$), appropriate for extremely high oscillations.\footnote{The method from \cite{accurate_KKK} is yet unpublished. While based on entirely different premises, it is  competitive with the approach of the present paper.} The comparisons are analysed in three regimes, with $\omega=10$, $\omega=500$ and $\omega=1500$. As can be seen method ${\rm Asympt}^{[3]}$ proves it efficiency only in presence of high oscillations and fails in the case of $\omega=10$. All other methods work very well in slowly oscillatory regime. Second order Runge--Kutta method loses its effectiveness already for $\omega=500$. It is clear from the middle and right columns that the second and  fourth order methods ${\rm RK}^{[2]} $,${\rm RK}^{[4]} $ and ${\rm BBCK}^{[4]}$ require time step $h$ to be smaller than $\omega^{-1}$. The new method $\Xi^{[3]}$ maintains third global error and proves its effectiveness in all (from slowly to highly) oscillatory regimes and does not require the time step $h$ to be smaller than $\omega^{-1}$. We can also observe, that the error constant does not grow with the growth of $ \omega $.

\begin{figure}[thb]
\centering
\includegraphics[width=1\linewidth]{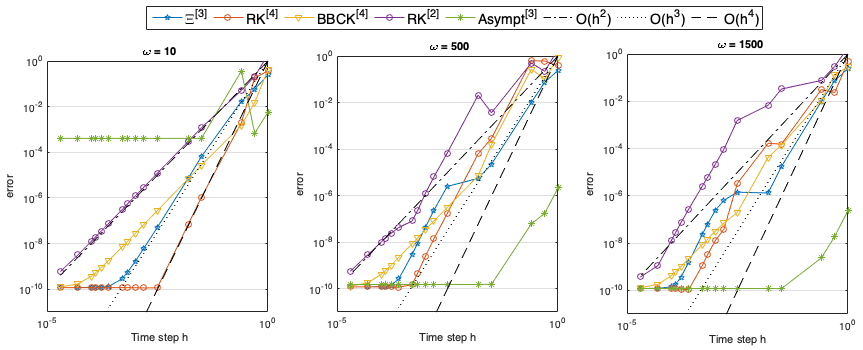}
\caption{Comparison of accuracy of methods known in the literature, where three oscillatory regimes in equation \ref{example_3}  are considered. }
\label{fig:4}
\end{figure}

\noindent\textit{Example 2.}\\

\noindent In this example we consider an equation where the input term features wide variety of frequencies. 
\begin{eqnarray}
\label{example_4}
\partial_t^2 u &=&  \partial_x^2 u - \sum_{n=0}^{5} \left(  1+\cos (10^n t) \right) x^2 u,\quad x\in [-10,10] , \quad t \in [0, 1], \\
\nonumber
u(x,0) &=& e^{- x^2/2} , \quad \partial_t u (x,0) = 0, \\
\nonumber
u(-10,t) &=& u(10,t).
\end{eqnarray} 
\newpage
\begin{wrapfigure}{r}{7cm}
\includegraphics[width=8cm]{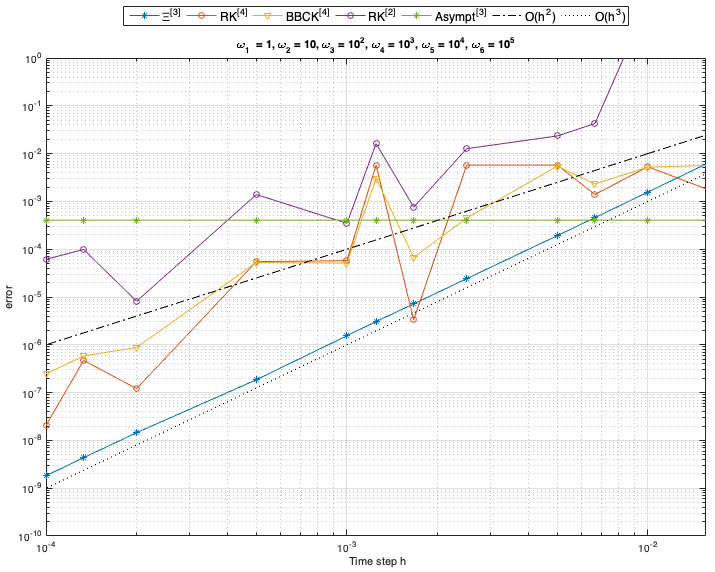}
\caption{The accuracy of numerical methods applied to the multi-frequency problem (\ref{example_4}) where the input term features a range of frequencies from $\omega_1=1$ to $\omega_6=10^5$.}
\label{fig:omegas}
\end{wrapfigure}

According to our computations the error of the new method $ \Xi^{[3]} $ scales like $ h^3 $ globally uniformly in $ \omega $, see Figure \ref{fig:omegas}. Note that the error constant is not affected by $\tilde{\omega}=10^5$, a very high frequency, and that the new method behaves well for all time steps. Moreover, as expected, the asymptotic method performs very purely. This is caused by the small frequency $ \omega_1  = 1 $ which appears in the input term. On the other hand the large oscillations like $ \omega_6  = 10^5$ sabotage methods  that approximate poorly highly oscillatory integrals. 

Our two examples demonstrate vividly the value of our proposed method $\Xi^{[3]}$ in comparison with both classical and asymptotic methods.
\\ \ \\ \ \\ \ \\ \ \\ \ \\ \ \\  \ \\ 

\section{Acknowledgments}

We are grateful to the reviewers for suggestions that significantly improved the presentation of the article.
We also wish to thank Thomas Zlosnik from University of Gdańsk  for enlightening discussions in the field of quantum cosmology and  importance of Klein-Gordon equations with oscillatory time- and space- dependent mass.
The work in this project was financed by the NCN project no. 2019/34/E/ST1/00390 and  partially  by Simons Foundation Award No. 663281 granted to the Institute of Mathematics of the Polish Academy of Sciences for the years 2021–2023. Numerical simulations were carried out by Karolina Lademann at the Academic Computer Centre in Gda\'{n}sk.

\appendix
\def\theequation{\Alph{section}.\arabic{equation}}
\section{Filon method in numerical scheme $\Xi^{[3]}$ }\label{app}

In the numerical scheme $\Xi^{[3]}$ we have to approximate the following two integrals
\begin{align} \nonumber
& \int_{0}^{h} G^{-1}\sin((h-\tau)  G )m(t_k+\tau)\left[\psi_k+ \tau \psi'_k+ \frac{\tau^2}{2h}\left(\psi'_k - \psi'_{k-1} \right)\! \right]\! d\tau,
\\ \nonumber
& \int_{0}^{h} \cos ((h-\tau) G)  m(t_k+\tau)\left[\psi_k+ \tau \psi'_k+ \frac{\tau^2}{2h}\left(\psi'_k - \psi'_{k-1} \right)\! \right]\! d\tau,
\end{align} 
where $m(t_k+\tau) = \sum_{n=1}^N a_n(t_k+\tau) e^{i \omega_n (t_k+\tau)}$.

Let us explain the approximation of Filon-type on the example of the first one. Thus we start with introducing the  following notation
\begin{align}\nonumber
f_{1,n}(\tau) &= G^{-1} \sin((h-\tau)  G )\; a_n(t_k+\tau)\\
\nonumber
f_{2,n}(\tau) &= G^{-1} \sin((h-\tau)  G )\; a_n(t_k+\tau)\; \tau, \\
\nonumber
f_{3,n}(\tau) &= G^{-1} \sin((h-\tau)  G )\; a_n(t_k+\tau) \; \frac{\tau^2}{2h},
\end{align}
and observe that

\begin{align*}
\int_{0}^{h} &G^{-1} \sin((h-\tau)  G )  m(t_k+\tau)  \left[\psi_k+ \tau \psi'_k+ \frac{\tau^2}{2h}\left(\psi'_k - \psi'_{k-1} \right)\! \right]\!  d\tau \\
\nonumber
& =\psi_k\int_{0}^{h} \sum_{n=1}^N f_{1,n}(\tau) e^{i \omega_n (t_k+\tau)}   d\tau +\psi'_k \int_{0}^{h}\sum_{n=1}^N f_{2,n}(\tau) e^{i \omega_n (t_k+\tau)}    d\tau +\left(\psi'_k - \psi'_{k-1} \right) \int_{0}^{h}\sum_{n=1}^N f_{3,n}(\tau) e^{i \omega_n (t_k+\tau)}   d\tau.
\end{align*}

Defining 
\begin{align}\nonumber
\texttt{int}_{1,n}(t_k)  & :=\int_0^h e^{i \omega_n (t_k+\tau)} d\tau  = -\frac{i(-1+e^{i \omega_n h })e^{i \omega_n t_k } }{\omega_n},\\
\nonumber
\texttt{int}_{2,n}(t_k) & := \int_0^h e^{i \omega_n (t_k+\tau)} \tau d\tau  = \frac{(-1+e^{i \omega_n h }(1-ih \omega_n))e^{i \omega_n t_k } }{\omega_n^2}, \\
\nonumber
\texttt{int}_{3,n}(t_k) &:=\int_0^h e^{i \omega_n (t_k+\tau)} \tau^2 d\tau  = \frac{(e^{ih\omega_n} (-ih^2 \omega_n^2 + 2h \omega_n +2i)-2i)e^{i \omega_n t_k } }{\omega_n^3}, 
\end{align}

we obtain the following approximation
\begin{align*}
\int_{0}^{h} & G^{-1} \sin((h-\tau)  G )  m(t_k+\tau) \left[\psi_k+ \tau \psi'_k+ \frac{\tau^2}{2h}\left(\psi'_k - \psi'_{k-1} \right)\! \right]\!  d\tau \\
\nonumber	
& =\sum_{n=1}^N  \left(f_{1,n}(0)\psi_k+f_{2,n}(0)\psi'_k+f_{3,n}(0)(\psi'_k-\psi’_{k-1})\right)\texttt{int}_{1,n}(t_k)\\ \nonumber	
& +\sum_{n=1}^N  \left(f_{1,n}'(0)\psi_k+f_{2,n}’(0)\psi'_k+f_{3,n}’(0)(\psi'_k-\psi’_{k-1})\right)\texttt{int}_{2,n}(t_k) \\
\nonumber	
& +\sum_{n=1}^N  \left(\frac{f_{1,n}'(h)-f_{1,n}'(0)}{2h}\psi_k+\frac{f_{2,n}’(h)-f_{2,n}’(0)}{2h}\psi'_k+\frac{f_{3,n}’(h)-f_{3,n}’(0)}{2h}(\psi'_k-\psi’_{k-1})\right)\texttt{int}_{3,n}(t_k) +\mathcal{O}(h^4).
\end{align*}

\bibliographystyle{plain}

\bibliography{document}

\end{document}